\documentclass[journal]{IEEEtran}

\usepackage[latin1]{inputenc}
\usepackage{amsmath}
\usepackage{authblk}
\usepackage{amsfonts}
\usepackage{amssymb}
\usepackage{amsthm}
\usepackage{indentfirst}
\usepackage[english]{babel}
\usepackage{graphicx}

\newtheorem{defn}{Definition}[section]
\newtheorem{thm}[defn]{Theorem}

\newtheorem{lemma}[defn]{Lemma}
\newtheorem{oss}[defn]{Remark}


\begin{document}


\title{\bf A new restart procedure for combinatorial optimization and its convergence}

\author{Davide Palmigiani and Giovanni Sebastiani
\thanks{Both authors are with Istituto per le Applicazioni del Calcolo ``Mauro Picone'', CNR, Rome, Italy, and with
Istituto ``Guido Castelnuovo'', ``Sapienza'' Universit{\`a} di Roma, Italy, email: palmigiani@mat.uniroma1.it, sebastia@mat.uniroma1.it}}
\maketitle


\begin{abstract}
We propose  a new iterative procedure to optimize the restart for
 meta-heuristic algorithms to solve combinatorial optimization, 
which uses independent algorithm executions.
The new procedure consists of either adding new executions or extending along time the existing ones. This is done on the basis  
of a criterion that uses a surrogate of the algorithm failure probability, 
  where the optimal solution is replaced by the 
\emph{best  so far} one.
 Therefore, it can be applied in practice.  
We prove that,  with probability one, the restart time of 
the proposed procedure
approaches, as the number of iterations diverges, the optimal value that minimizes 
the expected time to find the solution.
We apply the proposed restart procedure to several Traveling Salesman Problem instances with hundreds or thousands of cities. As basic algorithm, we used different versions of an Ant Colony Optimization algorithm.
We compare the results from the restart procedure with those from the basic algorithm.  
This is done  by considering the  failure probability of the two approaches for equal computation cost.
 This comparison  showed a significant gain when applying the proposed restart procedure, whose failure probability is several orders of magnitude lower.
\end{abstract}

\noindent
\begin{IEEEkeywords} 
Optimization methods, Probability, Stochastic processes
\end{IEEEkeywords}


\section{Introduction}

Solving a combinatorial optimization problem (COP) consists of finding an element, within a finite search domain, which minimizes a given so called \emph{fitness function}.
The domain has typically a combinatorial nature, e.g.\ the space of the hamiltonian paths on a complete graph. 
The COP prototype is the Traveling Salesman Problem (TSP), whose solution is an  Hamiltonian cycle on a weighted graph with minimal total weight \cite{Applegate}. Although a  solution of a COP always exists, finding it may involve   a very  high computational cost.  The study of the computational cost of numerical algorithms  started in the early  1940s with the first introduction of computers.
Two different kinds of algorithms can be used to solve a COP problem: exact or heuristic. A method of the  former type  consists of   a sequence  of non-ambiguous and computable operations producing a COP solution in a finite time. Unfortunately, it is  often not possible to use exact algorithms.
This is the case for instances of  a $\mathcal{NP}$-complete COP.  In fact,  to establish with certainty  if any element of the search space is a solution, requires  non-polynomial computational cost.  Alternatively, heuristic  algorithms  can be applied. Such type of algorithms only guarantee either  a solution in an infinite time or a suboptimal solution. Of great importance are  the \emph{meta-heuristic} algorithms (MHA) \cite{BLUMROLI}. They are independent of  the particular COP considered, and often stochastic. Among them, there are Simulated Annealing \cite{SIMANN1},  Tabu Search \cite{GLO},  Genetic Algorithms \cite{GENALG}
 and Ant Colony Optimization (ACO) \cite{ACODES}. 

A natural issue for MHA concerns their convergence \cite{Geman2}, \cite{SCH1}, \cite{gutjahr03}, \cite{Neu06}, \cite{Gutjahr08}. Due to  the stochastic nature  of such algorithms, they have to be studied probabilistically; unfortunately, even when their convergence
is theoretically guaranteed, it is often too slow to  successfully use them in practice. One possible way to cope with this problem is  the so called \emph{restart} approach, which, aside from the present context, it is used more generally for simulating rare events \cite{GarvKroese98}, \cite{GarvKroese99}, 
\cite{Misev}. It consists  of several independent executions of  a given MHA:
the executions are randomly initialized and the best solution, among those produced,
is chosen. When implementing the restart on a non-parallel machine, the restart
consists of periodic re-initialitations of the underlying MHA, the period $T$ being called {\it restart time}.

Despite the fact that the restart approach is widely used, very little work has been done
to study it theoretically for combinatorial optimization \cite{H02}, \cite{vMW}. 
In \cite{H02}, the restart is studied in its dynamic form instead of the static one considered here. 
Some results are provided for a specific evolutionary algorithm, i.e.\ the so called (1+1)EA, used to minimize three pseuso-Boolean functions.  In \cite{vMW} the fixed restart strategy is considered as done here. The first two moments of 
the random time $T_R$ for the restart to find a solution (optimization time) are studied as a function of $T$. An equation for $T$ is derived, whose solution minimizes the
expected value of $T_R$. However, this equation involves the distribution of the optimization time of the underlying MHA, which is unknown.

In practice, the underlying MHA is very commonly restarted when there are negligible differences in the fitness of the best-so-far solutions at consecutive iterations during a certain time interval. This criterion may not be adequate when we want to really find the COP solution and we are not satisfied with suboptimal ones. 

The failure probability of the restart is
$$
\mathbb{P}(T_R>k)=p(T)^{\left\lfloor\frac{k-1}{T}\right\rfloor} p\left(k-\left\lfloor\frac{k-1}{T}\right\rfloor T\right)\, ,
$$
where $p(t) $ is the failure probability of the underlying MHA.
The restart failure probability is geometrically decreasing towards zero with the number $k$ of restarts, the base of such geometric sequence being $p(T)$.
Therefore, a short restart time may result in a  slow convergence.
On the contrary, if the restart time 
is high, we may end up with a low number  of restarts and a high value of the restart failure probability. 
Then, a  natural problem is to  find 
 an ``optimal value''  of $T$  when using a finite amount of computation time.

Following  \cite{CarvSeb11}, the restart could be optimized by 
choosing a value for $T$ that minimizes the expected value of the time $T_R$:
\begin{equation}
\mathbb{E}[T_R]=\sum_{k=1}^{\infty}\mathbb{P}(T_R>k)\, .\label{exptime}
\end{equation}
In fact for any random variable not negative  $X$ it is possible to write
\begin{equation}
\mathbb{E}[X]=\int_0^{\infty}\mathbb{P}(X>t)\, dt\, .\label{eqexp1}
\end{equation}
In our case, the random variable $T_R$ is discrete and the integral 
in (\ref{eqexp1}) is replaced by a series whose generic term is  $\mathbb{P}(T_R>t)$. 

We now derive an upper bound for the r.h.s.\ of (\ref{exptime}):

\begin{equation}
\mathbb{E}[T_R]\leq\sum_{k=1}^{\infty}
p(T)^{\left\lfloor\frac{k-1}{T}\right\rfloor} 
\leq\sum_{k=1}^{\infty}p(T)^{\frac{k-1}{T} -1} 
=\frac{1}{(1-p(T)^{\frac{1}{T}})p(T)}\, .\label{upperbound}
\end{equation}
By means of  this bound, we can then optimize the RP by minimizing the function
$g(x):=\left [(1-p(x)^{\frac{1}{x}})p(x)\right ]^{-1}\, .$

 Whenever this function is monotonically decreasing, there is  no advantage to use the restart. In the other case, an optimal value for the restart time
could be the first value $t_m$ where the function $g$ assumes 
its minimum.
However, this criterion cannot be applied  in practice  since the MHA failure probability is unknown.

Here we propose  a new iterative procedure to optimize  the restart. It does not rely on the MHA failure probability. Therefore, it can be applied in practice.  One procedure iteration consists of either adding new MHA executions or extending along time the existing ones.  Along the iterations, the  procedure  uses an estimate of the MHA failure probability   where the optimal solution is replaced by the \emph{best  so far} one.
We make  the hypothesis that the MHA failure probability 
converges to zero with the number of iterations. Then, we prove that,  with probability one, the restart time of the proposed procedure
approaches, as the number of iteration diverges, the optimal value $t_m$.
We also show the results of the application of  the proposed restart procedure to several TSP instances with hundreds or thousands of cities. As MHA we use different versions of an ACO.
Based on a large number of experiments, we compare the results from the restart procedure with those from the basic ACO. 
This is done  by considering the  failure probability of the two approaches  with the same total computation cost. The two algorithms have been implemented in MATLAB and C.  
 This comparison  shows a significant gain when applying the proposed restart procedure.

\section{The procedure}
\noindent The restart procedure (RP)  starts by executing $r_0$ independent replications of the underlying MHA for a certain number of time steps $T_0$.
Let us denote by $X_i(t)$ the solution produced by the replication $i$ of the underlying algorithm at time $t$.
 Then, at the end of iteration $k$, based on the criterion described later in this section, the RP  either increases  the number of replications from $r_k$ to $r_{k+1}$ by executing $r_{k+1}-r_k$  replications of the underlying algorithm until time $T_k$, or it continues the execution of the existing $r_k$ replications until time $T_{k+1}> T_k$. Let $Y_i(t)$ be the value of the best solution found by $i$-th replication until time $t$ i.e. $Y_i(t)=\min\{f(X_i(s)), s=1,...,t\}$, where $f$ is the function to minimize. Each $Y_i(t)$ is an  independent realization of the same process. We can always think  at the RP  in the following way: given the infinite matrix $\bf{Y}$ with generic element $Y_i(t)$ where $i,t=1,2,\dots$,  a realization of the RP produces, at each iteration $k$, a nested sequence $\{Y_{A_k}\}_{k\in\mathbb{N}}$ of  finite matrices, where $A_k:=\{(i,t):i=1,\dots,r_k\quad t=1,\dots ,T_k\}$. The matrix $Y_{A_k}$ corresponds to the first $r_k$ rows and $T_k$   columns of $\bf{Y}$.
Let $\tilde Y_k$ denote the minimum value of this matrix at the end of iteration $k$: $\displaystyle \tilde Y_k=\min{Y_{A_k}}=\min_{(i,t)\in A_k}Y_i(t)$. We  estimate   the failure probability sequence   by means of the empirical frequency 
 \begin{equation}\nonumber 
\hat p_k( t)=
\begin{cases}
 \displaystyle{\frac {1}{ r_k}}\sum_{i=1}^{r_k} 1_{\{Y_i(t)> \tilde Y_k\}}       & t=1,\dots,T_k, \\ 
0 & \text{otherwise.}
\end{cases}
\end{equation}
Next, consider the function  $g_k(t)=[(1-\hat p_k(t)^\frac 1 t)\hat p_k(t)]^{-1}$, $t=1,\ldots,T_k$, and 
define $\hat\sigma_k$ the first position with a left and right increase of the function $g_k$ (relative minimum).  Let $\lambda$ be a  number in $(0,1)$.  If $\displaystyle\hat\sigma_k<\lambda\cdot T_k$, then the RP increases the number of replications by means of a certain rule $r_{k+1}:=f_r(r_k)$. 
Otherwise, the RP increases the restart time  according to    $T_{k+1}:=f_T(T_k)$. We assume that  $\forall x$ we have $f_r(x)>x$  and $f_T(x)>x$.
As a consequence,  for any fixed $x>0$, it holds $f_r^{(k)}(x),f_T^{(k)}(x)\rightarrow\infty$, $k$ denoting  the consecutive application of a function for $k$ times. 
Therefore, the recursive formula for $(r_k,T_k)$ is $$(r_{k+1},T_{k+1})=\begin{cases}
  (f_r(r_k),T_k)& \text{if $\hat\sigma_k<\lambda\cdot T_k$,} \\ 
(r_k,f_T(T_k))& \text{otherwise.}
\end{cases}$$   

\noindent
Below there is the pseudo code for RP: 
\\
$r=r_0$;\\
$T=T_0$;\\
\textbf{for} replication $i=1,2,\dots,r$ \textbf{do} \\
\indent execute algorithm $\mathcal{A}$ until time $T_0$; \\
\indent save $\mathcal{A}_i(T_0)$;\\
\textbf{end for}\\
save $Y_{A_0}$;\\
compute $\hat\sigma_0$ from $Y_{A_0}$;\\
\textbf{for} iteration $k=1,2,\dots$ \textbf{do} \\
\indent\textbf{if} $\hat\sigma_{k-1}\ge\lambda\cdot T_{k-1}$ \textbf{then}\\
 \indent\indent $T_{k}=f_T(T_{k-1})$;\\
 \indent\indent $r_{k}=r_{k-1}$;\\
 \indent\indent\textbf{for} replication $i=1,2,\dots,r_k$ \textbf{do} \\
 \indent\indent\indent continue the execution of  $\mathcal{A}_i$ until $T_k$;\\
 \indent\indent\indent save $\mathcal{A}_i(T_k)$;\\
 \indent\indent\textbf{end for}\\
 \indent\textbf{else then}\\
 \indent\indent $r_k=f_r(r_{k-1})$;\\
 \indent\indent $T_{k}=T_{k-1}$;\\
\indent\indent\textbf{for} replication $i=r_{k-1}+1,r_{k-1}+2,\dots,r_k$ \textbf{do} \\
\indent\indent\indent execute $\mathcal{A}_i$ until $T_k$;\\
 \indent\indent\indent save $\mathcal{A}_i(T_k)$;\\
\indent\indent\textbf{end for}\\
\indent\textbf{end if}\\
\indent save $Y_{A_k}$;\\
\indent compute $\hat\sigma_k$ from $Y_{A_k}$;\\
\textbf{end for}\\

\section{RP convergence}
\noindent We denote by $f_m$ the value of the solution of the optimization problem. Moreover,
we recall the functions $g(t)=[(1-p(t)^\frac 1 t)p(t)]^{-1}$, where $p(t) $ is the failure probability and
$g_k(t)=[(1-\hat p_k(t)^\frac 1 t)\hat p_k(t)]^{-1}$, whose domain is $\{1,\ldots,T_k\}$.

In order to derive the following results,  we assume that  
\begin{enumerate}
\item $\displaystyle p(t)\underset{t\rightarrow\infty}\longrightarrow 0$,
\item $g(t)$ admits only one point of  minimum $t_m$ and it is   strictly decreasing for $t\leq t_m$,
\item $p(1)<1$.
\end{enumerate}
We notice that point $1$ just ensures that the underlying algorithm will eventually find the solution of the problem,
which is quite a natural requirement.
Moreover, point $3$ in practice does not give limitations. In fact, we can always aggregate some 
initial iterations of the algorithm into a single one. 
Because of point $1$, the aggregate iteration can always be chosen in such a way that point $3$ is satisfied. 

\begin{oss}\label{oss1}
We notice that,  by  the assumptions on the functions $f_r$ and   $f_T$, and by the RP  definition,  the probability that  both the sequences $r_k$ and $T_k$ are  bounded is zero.
\end{oss}
\begin{lemma}\label{lemmaconvprob}
Let $p(t)$ be as above. Let $(r_k,T_k)$ be  the  sequence of random variables which describes the RP. Then, it holds
\begin{enumerate}
\item $\mathbb{P}\left (r_k\rightarrow\infty\right)=1$, 
\item $\label{convopt} \mathbb{P}\left(\left\{\exists k:\quad \tilde Y_k=f_m\right\}\right)= 1.$
 \end{enumerate}
\end{lemma}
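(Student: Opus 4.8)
The plan is to prove the two claims in order, using Remark~\ref{oss1} as the starting point. For claim~1, I would argue by contradiction on the complementary event $\{r_k \not\to\infty\}$. On this event, since $r_k$ is nondecreasing (it only grows, via $f_r$, or stays fixed), $r_k$ is eventually constant, say $r_k = \bar r$ for all $k \geq k_0$. By Remark~\ref{oss1}, the event that \emph{both} $r_k$ and $T_k$ are bounded has probability zero; hence on $\{r_k \not\to\infty\}$ we may assume $T_k \to \infty$ almost surely. So from iteration $k_0$ on, every iteration extends the time horizon, which by the RP rule means $\hat\sigma_k \geq \lambda T_k$ for all $k \geq k_0$. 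The heart of the argument is then to show this forces a contradiction: with only $\bar r$ replications fixed forever, I would show that as $T_k\to\infty$ the empirical estimate $\hat p_k(t)$ behaves, for small $t$, like a fixed empirical frequency built from $\bar r$ samples, so $g_k$ has its first relative minimum $\hat\sigma_k$ at a bounded (random) position; more precisely, one uses that $p(1)<1$ (assumption 3) so that with positive probability among the $\bar r$ fixed replications at least one has $Y_i(1) > \tilde Y_k$ eventually, making $\hat p_k(1)>0$, while $\hat p_k(t)\to 0$ as $t$ grows along the fixed replications — this makes $g_k$ eventually increasing after some bounded point, so $\hat\sigma_k$ stays bounded while $\lambda T_k \to\infty$, contradicting $\hat\sigma_k \geq \lambda T_k$. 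Hence $\mathbb{P}(r_k\to\infty)=1$.

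For claim~2, I would condition on the probability-one event from claim~1 that $r_k\to\infty$. Fix a realization with $r_k\to\infty$. Since each row $Y_i(\cdot)$ is an independent copy of the underlying MHA run, and assumption~1 gives $p(t)\to 0$, for each fixed replication index $i$ we have $Y_i(t)\downarrow f_m$ as $t\to\infty$ with probability one — or at least $\mathbb{P}(\exists t: f(X_i(t)) = f_m) $ ... actually more carefully: $\mathbb{P}(Y_i(t) > f_m \ \forall t) = \lim_t p(t) = 0$, so almost surely each replication eventually attains $f_m$. Now along the RP, since $r_k\to\infty$, infinitely many fresh replications are added; by the second Borel–Cantelli / independence argument, almost surely at least one replication (in fact infinitely many) attains the value $f_m$ at some finite time, and since that replication is included in some $A_k$ with $T_k$ eventually large enough, $\tilde Y_k = f_m$ for all large $k$. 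The cleanest route: let $q = \mathbb{P}(Y_1(t_0) = f_m)$ for $t_0$ large enough that $q>0$ (exists by assumption 1); since $r_k\to\infty$ and $T_k$ is at least $T_0$, once $T_k \geq t_0$ the event "none of the first $r_k$ replications hit $f_m$ by time $t_0$" has probability $(1-q)^{r_k}\to 0$, and a Borel–Cantelli argument over the subsequence where $r_k$ increases closes it.

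**Main obstacle.** The delicate step is claim~1: rigorously controlling the random position $\hat\sigma_k$ of the first relative minimum of $g_k$ when only finitely many replications are present. One must show that, with the number $\bar r$ of replications fixed, $\hat\sigma_k$ cannot grow linearly with $T_k$ — intuitively clear because $\hat p_k$ is a step function taking at most $\bar r + 1$ distinct values and tending to $0$ in $t$, so $g_k$ must start increasing well before $\lambda T_k$ — but making this precise requires care about the definition of "first left-and-right increase," about ties in the empirical frequencies, and about the interaction between the decreasing-in-$t$ behavior of $\hat p_k$ and the $t$-dependence in the exponent $1/t$ of $g_k$. I would handle it by showing that, for $t$ in a range where $\hat p_k(t)$ has stabilized to a value $<1$ and is nonincreasing, $g_k(t)$ is eventually monotone, pinning $\hat\sigma_k$ to a bounded set, and then invoke $\lambda T_k\to\infty$.
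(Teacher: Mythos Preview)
Your overall strategy for claim~1 matches the paper's: argue by contradiction, so that on a positive-probability event $r_k=\bar r$ eventually, $T_k\to\infty$ (by Remark~\ref{oss1}), and hence $\hat\sigma_k\ge\lambda T_k\to\infty$ by the RP rule. However, you make the contradiction harder than necessary. You do not need to analyze monotonicity of $g_k$, worry about ties, or invoke assumption~3 here. The paper simply observes that by assumption~1 each of the $\bar r$ fixed replications almost surely reaches the optimum, so there is a (random) finite $t_0$ by which all $\bar r$ runs have hit $f_m$. For $k$ large enough that $T_k\ge t_0$ we then have $\tilde Y_k=f_m$ and $\hat p_k(t)=0$ for every $t\ge t_0$; hence $g_k$ on $\{1,\dots,T_k\}$ is determined entirely by its (now fixed) values on $\{1,\dots,t_0\}$. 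Thus $\hat\sigma_k$ is eventually constant, in particular bounded, contradicting $\hat\sigma_k\ge\lambda T_k\to\infty$. This sidesteps the ``main obstacle'' you flag.

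Your argument for claim~2 has a genuine gap. You invoke assumption~1 to find some $t_0$ with $q:=\mathbb P(Y_1(t_0)=f_m)>0$ and then need $T_k\ge t_0$ in order to place a successful replication inside some $A_k$ (you write ``with $T_k$ eventually large enough'' and ``once $T_k\ge t_0$''). But at this stage nothing guarantees that $T_k$ ever exceeds $T_0$: the result that $\sup_k T_k$ is large is Theorem~\ref{lemmatempo}, which is proved \emph{using} the present lemma, so you cannot appeal to it here. The paper closes this gap via assumption~3: since $p(1)<1$, the independent events $\{Y_i(1)=f_m\}$ have common positive probability, so almost surely $Y_i(1)=f_m$ for some $i$; because $T_k\ge T_0\ge 1$ always, as soon as $r_k\ge i$ (guaranteed by part~1) we have $(i,1)\in A_k$ and hence $\tilde Y_k=f_m$. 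Your ``cleanest route'' is repaired the same way: take $t_0=1$.
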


\begin{proof}
$\mathbf{1.}$ If $\mathbb{P}\left (r_k\rightarrow\infty\right)<1$, then, with positive probability, the following three conditions hold for   a certain positive integer $r$:
\begin{description}
\item[i)] $r_k=r$ eventually;
\item[ii)] $T_k$ diverges (for Remark \ref{oss1});
\item[iii)] $\hat\sigma_k\ge\lambda T_k$ eventually (from ii) and the definition of the RP).
\end{description}
By assumption $1$ and ii), with probability one, the underlying $r$ copies of the algorithm have all reached the optimum  after a certain time $t_0$. Therefore, it follows that, for all $h$ large enough, we have $\hat p_h(t)=0$ for $ t_0\le t\le T_h $. Hence, eventually $\hat\sigma_h$ does not change as $h$ increases,  which is a contradiction with iii). Therefore $\mathbb{P}\left (r_k\rightarrow\infty\right)=1$.

$\mathbf{2.}$ 
By i), since $p(1)<1$, with probability one, there exists $i$ such that $Y_i(1)=f_m$; for all $k$ so large that $r_k\ge i$ it will be $\tilde Y_k=f_m$.
This proves the point.
\end{proof}

\begin{lemma}\label{lemma3.3}
For each $t\in \mathbb{N}$, it holds
\begin{equation}\label{eq1thm}
\mathbb{P}\left (\left\{\sup_{k}T_k<t\right\}\cup\left\{\sup_{k}T_k\ge t \, , \, \lim_{k\rightarrow\infty}\hat p_k(t)=p(t)\right\}\right )=1.\nonumber
\end{equation}
 \end{lemma}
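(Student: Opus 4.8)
The plan is to fix an arbitrary $t\in\mathbb N$ and carry out the whole argument on a single event of probability one, obtained by intersecting the good event of Lemma~\ref{lemmaconvprob} with a strong‑law event. First I would observe that, by the recursion defining the RP, the sequence $T_k$ is non‑decreasing (at each iteration $T_{k+1}$ equals either $T_k$ or $f_T(T_k)>T_k$). Hence exactly one of two situations occurs: either $\sup_k T_k<t$, in which case $t>T_k$ for every $k$, so $\hat p_k(t)=0$ for all $k$ by definition and the first event in the union holds trivially; or $\sup_k T_k\ge t$, and then monotonicity yields a random index $k_1$ with $T_k\ge t$ for all $k\ge k_1$. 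It then remains to show that in this second situation $\hat p_k(t)\to p(t)$.

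On the event $\{\sup_k T_k\ge t\}$, for $k\ge k_1$ we have $\hat p_k(t)=\frac1{r_k}\sum_{i=1}^{r_k}1_{\{Y_i(t)>\tilde Y_k\}}$. By Lemma~\ref{lemmaconvprob}, with probability one $r_k\to\infty$ and there is a random $k_0$ with $\tilde Y_k=f_m$ for every $k\ge k_0$. Therefore, for $k\ge\max(k_0,k_1)$ the estimator collapses to $\hat p_k(t)=\frac1{r_k}\sum_{i=1}^{r_k}1_{\{Y_i(t)>f_m\}}$, i.e.\ the running average of the first $r_k$ terms of the i.i.d.\ sequence $\bigl(1_{\{Y_i(t)>f_m\}}\bigr)_{i\ge1}$ of Bernoulli variables of mean $\mathbb P(Y_i(t)>f_m)=p(t)$, since the replications are independent realizations of the same process. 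Next I would apply the strong law of large numbers to that fixed i.i.d.\ sequence: on a further event of probability one, $\frac1n\sum_{i=1}^n1_{\{Y_i(t)>f_m\}}\to p(t)$ as $n\to\infty$. Working on the intersection of this event with the Lemma~\ref{lemmaconvprob} event, and using the elementary fact that if $a_n\to L$ and $n_k\to\infty$ are integers then $a_{n_k}\to L$, one concludes $\hat p_k(t)=\frac1{r_k}\sum_{i=1}^{r_k}1_{\{Y_i(t)>f_m\}}\to p(t)$ as $k\to\infty$. This gives the second event in the union on $\{\sup_k T_k\ge t\}$; combining with the first case, the displayed union has probability one.

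I expect the point needing the most care to be the interplay between the data‑dependent replication count $r_k$ and the fact that $\hat p_k(t)$ is built from the \emph{best‑so‑far} value $\tilde Y_k$ rather than from $f_m$: this is precisely what forces the use of Lemma~\ref{lemmaconvprob}, which guarantees that $\tilde Y_k$ eventually equals $f_m$, so that $\hat p_k(t)$ genuinely becomes the empirical frequency of the MHA failure event at time $t$. The complementary subtlety is that the strong law must be stated for the sequence indexed by the deterministic replication number and only afterwards evaluated at the random index $r_k$; since the only property of $r_k$ used is its divergence (part~1 of Lemma~\ref{lemmaconvprob}), no stopping‑time or anticipation issue arises, and the asserted convergence $\hat p_k(t)\to p(t)$ is a pathwise statement about the tail of the sequence, which is all that is claimed.
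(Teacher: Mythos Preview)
Your proposal is correct and follows essentially the same approach as the paper: both arguments invoke part~1 of Lemma~\ref{lemmaconvprob} ($r_k\to\infty$) together with the strong law of large numbers to get $\frac{1}{r_k}\sum_{i=1}^{r_k}1_{\{Y_i(t)>f_m\}}\to p(t)$, and then part~2 of Lemma~\ref{lemmaconvprob} to replace $f_m$ by $\tilde Y_k$ for large $k$. Your version is somewhat more careful than the paper's about the monotonicity of $T_k$ and the random-index subtlety for the SLLN, but the underlying argument is identical.
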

\begin{proof}

Let us consider the case when the event  $E_t:=\left\{\sup_{k}T_k\ge t\right\}$ happens,  then we can eventually compute 
$\hat p_k(s)$,  for $s=1,\ldots, t$.
 By point $1$ of   Lemma \ref{lemmaconvprob} and  the strong law of large numbers, we get 
\begin{equation}\label{slln}
\mathbb{P}\left (E_k \, , \,   \lim_{k\rightarrow\infty}\frac 1 {r_k}\sum_{i=1}^{r_k}1_{\{Y_i(t)>f_m\}}=p(t)\right )
=\mathbb{P}\left (E_k \right ).\nonumber
\end{equation}
Hence, using point $2$ of Lemma \ref{lemmaconvprob}, we obtain
that
$$
\mathbb{P}\left (E_k \, , \,   \lim_{k\rightarrow\infty}\frac 1 {r_k}\sum_{i=1}^{r_k}1_{\{Y_i(t)>f_m\}}=p(t)\right ) $$
is equal to
$$
\mathbb{P}\left (E_k \, , \,   \lim_{k\rightarrow\infty}\frac 1 {r_k}\sum_{i=1}^{r_k}1_{\{Y_i(t)>\tilde Y_k\}}=p(t)\right ).
$$
Since, by definition 
$$
\hat p_k(t)=\frac 1 {r_k}\sum_{i=1}^{r_k}1_{\{Y_i(t)>\tilde Y_k\}},
$$
we have
\begin{equation}
\mathbb{P}\left (E_k \, , \,   \lim_{k\rightarrow\infty}\hat p_k(t) =p(t)\right )=
\mathbb{P}\left (E_k \right ),\nonumber
\end{equation}
from which the thesis follows.
\end{proof}

\begin{thm}\label{lemmatempo}
For the RP it holds
$$\mathbb{P}\left (\sup_k T_k>\frac{t_m}{\lambda}\right)=1.$$ 
\end{thm}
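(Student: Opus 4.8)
The plan is to argue by contradiction: suppose $\mathbb{P}(A)>0$, where $A:=\{\sup_k T_k\le t_m/\lambda\}$. On $A$ the non-decreasing, integer-valued sequence $(T_k)$ is bounded, hence eventually equal to a (random) value $T^\ast\le t_m/\lambda$; write $T_k=T^\ast$ for all $k\ge k_0$. By point~1 of Lemma~\ref{lemmaconvprob} we have $r_k\to\infty$ almost surely, so on $A$ the number of replications keeps growing; since from iteration $k_0$ on the restart time is never extended, the RP must increase the number of replications at every iteration beyond $k_0$, which by the definition of the RP forces $\hat\sigma_j<\lambda T^\ast$ for all $j\ge k_0$.

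Next I would transfer the convergence of the empirical failure probabilities to the functions $g_k$ on the (now bounded) window. Applying Lemma~\ref{lemma3.3} for each $t\in\{1,\dots,t_m\}$ and intersecting these finitely many almost-sure events, one obtains that, almost surely on $A$, $\hat p_k(t)\to p(t)$ for every $t\le m^\ast:=\min(T^\ast,t_m)$. By assumption~2 the function $g$ is finite and strictly decreasing on $\{1,\dots,t_m\}$, so $p(t)\in(0,1)$ there and $x\mapsto[(1-x^{1/t})x]^{-1}$ is continuous at $p(t)$; hence $g_k(t)\to g(t)$ for every $t\le m^\ast$. In particular the finite chain of strict inequalities $g(1)>g(2)>\dots>g(m^\ast)$ is preserved in the limit: for all large $k$ one has $g_k(1)>g_k(2)>\dots>g_k(m^\ast)$.

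It follows that, for all large $k$, $g_k$ is strictly decreasing on $\{1,\dots,m^\ast\}$, so none of the interior positions $2,\dots,m^\ast-1$ is a (left-and-right) relative minimum of $g_k$; therefore either $g_k$ has no interior relative minimum at all, or its first one, $\hat\sigma_k$, satisfies $\hat\sigma_k\ge m^\ast$. If $T^\ast\le t_m$ then $m^\ast=T^\ast$ and, since relative minima occur at interior positions $\le T^\ast-1$, there is no interior relative minimum, so by the RP definition the restart time is extended at iteration $k$. If instead $t_m<T^\ast\le t_m/\lambda$ then $m^\ast=t_m$ and $\lambda T^\ast\le\lambda\cdot(t_m/\lambda)=t_m\le\hat\sigma_k$ (or, again, $g_k$ has no interior relative minimum), so the restart time is likewise extended at iteration $k$. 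In both cases, for all large $k$ the RP extends $T$, contradicting $T_k\equiv T^\ast$ for $k\ge k_0$. Hence $\mathbb{P}(A)=0$, that is, $\mathbb{P}(\sup_k T_k>t_m/\lambda)=1$.

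The main obstacle is the middle step: passing from the pointwise almost-sure convergence $\hat p_k(t)\to p(t)$ supplied by Lemma~\ref{lemma3.3} to control of the combinatorial quantity $\hat\sigma_k$. This works precisely because on the bad event $A$ the window $\{1,\dots,T^\ast\}$ is finite, so only finitely many strict inequalities among the values of $g$ are at stake and they survive passage to the limit. One should also fix the convention that, when $g_k$ has no interior relative minimum, the RP extends the restart time (the only sensible reading of its definition, and exactly what is used above), and note the minor point that $g_k(t)$ may equal $+\infty$ when $\hat p_k(t)\in\{0,1\}$, with $g(t)=+\infty$ handled correspondingly.
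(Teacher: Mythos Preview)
Your proof is correct and follows essentially the same route as the paper's: assume the supremum is bounded, use Lemma~\ref{lemma3.3} and continuous mapping to pass $g_k\to g$ on the finite window, exploit the strict decrease of $g$ up to $t_m$, and split into the cases $T^\ast\le t_m$ and $t_m<T^\ast\le t_m/\lambda$ to force an extension of $T$. Your formulation is in fact slightly tidier than the paper's: by preserving the whole chain $g(1)>\dots>g(m^\ast)$ you directly obtain monotonicity of $g_k$ on $\{1,\dots,m^\ast\}$ (the paper's $\varepsilon$-argument literally only yields $g_k(t)>g_k(\tilde M)$, which by itself does not exclude a relative minimum below $\tilde M$), and you make explicit the convention that absence of an interior relative minimum triggers a time extension, which the paper uses implicitly via $\hat\sigma_k=T_k$.
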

\begin{proof}
Let us assume that the thesis is not true.  Then, there exists  an integer number $M$ such that $M\le \frac{t_m}{\lambda}$ and $\mathbb{P}\left (\left \{\sup_k T_k=M\right\}\right)>0$. 
On the event $\left\{\sup_k T_k=M\right\}$, by  both Lemma (\ref{lemma3.3}) 
and the continuous mapping, we have   the convergence $g_k(t)\rightarrow g(t)$, for any $t\le M$.
This means that, for any $\varepsilon>0$  there is a positive probability  that 
$\displaystyle\bigcap_{t=1}^{M}\left\{\left\vert g_k(t)-g(t)\right\vert <\varepsilon\right\}$, when $k$ is large enough. 
Let us define $\tilde M:=\min(M,t_m)$.
We then have  $g_k(\tilde M)<g(\tilde M)+\varepsilon$ and  $ g_k(t)>g(t)-\varepsilon$ for any $1\leq t < \tilde M$.  
Subtracting the first inequality from the last one, we obtain
 $$g_k(t)-g_k(\tilde M)>g(t)-g(\tilde M)-2\varepsilon.$$ 
 Since $g$ is strictly decreasing until $t_m$, the r.h.s.\ of the last inequality is strictly larger than $g(\tilde M-1)-g(\tilde M)-2\epsilon$. 
By taking  $\varepsilon$ sufficiently small,
 we get $g_k(t)-g_k(\tilde M)>0$ for any $t<\tilde M$. 
Hence,  with  a positive probability,  we get  eventually  $\displaystyle\hat\sigma_k\ge \tilde M$. 

If $M\le t_m$,  then $\tilde M=M$ and with positive probability  eventually we have $\displaystyle\hat\sigma_k\ge M$. Since $\hat\sigma_k\le T_k\le\sup_k T_k=M$, we have eventually $\hat \sigma_k=T_k=M$. For one of such $k$, it holds $\displaystyle\frac{\hat\sigma_k}{T_k}=1>\lambda$, so that,  by the definition of RP, at the following iteration with positive probability we have $T_{k+1}>T_k=M=\sup_k T_k$, which is impossible. 

In the other case, $\displaystyle t_m<M\le\frac{t_m}{\lambda}$, we have $\tilde M=t_m$, and 
 there is a positive probability that  $t_m=\tilde M\le\hat\sigma_k\le T_k\le\sup_k T_k=M$ for $k$ large enough;  for any of these values of $k$, we get $ \displaystyle
\frac{\hat \sigma_k}{T_k}\ge
\frac{t_m}{T_k}\ge  \frac{t_m}{M}\ge \lambda$.
As a consequence, with positive probability eventually we have $T_{k+1}=f_T(T_k)>T_k$, which is a contraddiction with $\sup_k T_k=M$.
\end{proof}

\begin{thm}
  If we define $\displaystyle T:=\left\lceil\frac{ t_m}{\lambda}\right\rceil$, 
it holds 
\begin{enumerate}
\item 
 $\displaystyle \mathbb{P}\left (\bigcap_{t=1}^{T}\lim_{k\rightarrow\infty}\hat p_k(t)=p(t)\right )=1,$
\item 
$\displaystyle \mathbb{P}\left(\lim_{k\rightarrow\infty}\hat \sigma_k =t_m\right)=1.$
\end{enumerate}
\end{thm}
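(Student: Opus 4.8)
The plan is to derive both claims from Theorem~\ref{lemmatempo} and Lemma~\ref{lemma3.3}, by localizing the convergence of $\hat p_k$ to the finitely many columns $t=1,\dots,T$. The first step is the observation that, since $(T_k)$ is a nondecreasing integer sequence, $\sup_k T_k$ takes values in $\mathbb{N}\cup\{+\infty\}$; as Theorem~\ref{lemmatempo} gives $\sup_k T_k>t_m/\lambda$ almost surely, and the least integer strictly exceeding $t_m/\lambda$ is at least $\lceil t_m/\lambda\rceil=T$ (checking separately the cases where $t_m/\lambda$ is or is not an integer), we conclude $\mathbb{P}(\sup_k T_k\ge T)=1$.

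For part~1, fix $t\in\{1,\dots,T\}$. Then $\{\sup_k T_k<t\}\subseteq\{\sup_k T_k<T\}$ is a null event, so the dichotomy of Lemma~\ref{lemma3.3} collapses to $\mathbb{P}(\lim_k\hat p_k(t)=p(t))=1$. Intersecting these finitely many almost-sure events over $t=1,\dots,T$ gives part~1.

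For part~2, I would argue on the almost-sure event $\Omega_0$ on which $\hat p_k(t)\to p(t)$ for every $t\le T$ (part~1) and $T_k\ge T$ for all large $k$ (from $\sup_k T_k\ge T$ and monotonicity of $(T_k)$). Since $\lambda<1$ we have $T=\lceil t_m/\lambda\rceil\ge t_m+1$, so $t_m$ and $t_m+1$ both lie in $\{1,\dots,T\}$. By the continuous mapping theorem, as already used in the proof of Theorem~\ref{lemmatempo}, on $\Omega_0$ one gets $g_k(t)\to g(t)$ for $t\le t_m$ and $g_k(t_m+1)\to g(t_m+1)$. Assumption~2 yields the strict chain $g(1)>\dots>g(t_m)$ together with $g(t_m)<g(t_m+1)$; these being finitely many strict inequalities, for $k$ large enough we have simultaneously $g_k(1)>\dots>g_k(t_m)<g_k(t_m+1)$. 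Then no position in $\{1,\dots,t_m-1\}$ is a relative minimum of $g_k$ (each is immediately followed by a strict decrease), while $t_m$ is one (strict increase to both sides, which is meaningful since $T_k\ge t_m+1$); as $\hat\sigma_k$ is the \emph{first} such position, $\hat\sigma_k=t_m$ for all large $k$. Hence $\hat\sigma_k\to t_m$ on $\Omega_0$, which is part~2.

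The main --- essentially the only --- delicate point is the implication $\hat p_k(t)\to p(t)\Rightarrow g_k(t)\to g(t)$, i.e.\ the continuity of $x\mapsto[(1-x^{1/t})x]^{-1}$ at $x=p(t)$, which needs $p(t)\in(0,1)$. Here $p(t)\le p(1)<1$ by assumption~3 (and monotonicity of the failure probability) secures the upper end, while $p(t)>0$ for $t\le t_m$ is automatic, since $p(t)=0$ would make $g(t)=+\infty$ and contradict $g$ being strictly decreasing up to its finite minimum $t_m$. The genuine boundary case is $t=t_m+1$ with $p(t_m+1)=0$: then $g_k(t_m+1)\to+\infty>g(t_m)$, so $t_m$ remains a strict relative minimum of $g_k$ and the argument is unaffected.
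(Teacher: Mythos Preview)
Your proof is correct and follows essentially the same strategy as the paper: Theorem~\ref{lemmatempo} forces $\sup_k T_k\ge T$, which yields $\hat p_k(t)\to p(t)$ for $t\le T$, and continuous mapping then gives $g_k\to g$ on $\{1,\dots,T\}$, whence $\hat\sigma_k\to t_m$. Your version is in fact more careful than the paper's---for part~1 you invoke Lemma~\ref{lemma3.3} directly (the paper instead re-derives its content inline via Lemma~\ref{lemmaconvprob} and the strong law), and for part~2 you spell out the preservation of the finitely many strict inequalities, the fact that $T\ge t_m+1$, and the continuity of $x\mapsto[(1-x^{1/t})x]^{-1}$ at $p(t)$ including the boundary case $p(t_m+1)=0$, all of which the paper leaves implicit.
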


\begin{proof}
$\mathbf{1.}$  By theorem \ref{lemmatempo}, for any $t=1,2,\dots,T$, with probability one we can eventually compute $\hat p_k(t)$. Hence,
 by the two statements of   Lemma \ref{lemmaconvprob} and  the strong law of large numbers,  we get
$$\mathbb{P}\left(\bigcap_{t=1}^{T}\lim_{k\rightarrow\infty}\hat p_k(t)=p(t)\right)=1,$$ that completes the proof of this point.

$\mathbf{2.}$ 
From $1$ and the continuous mapping, with probability one, it holds
$$
\lim_{k\rightarrow\infty}g_k(t)=g(t),
$$
for $t=1,\ldots,T$, with $T> t_m$.
Therefore, the sequence $\hat \sigma_k$ converges to $t_m$.
\end{proof}

\begin{oss}
The efficiency of the RP  depends on the expected value of the ratio $\sup_k T_k/t_m$. Although we do not have
derived upper-bounds for this ratio, in all applications we performed, it remains sufficiently
close to one.  
\end{oss}

  \section{Numerical results}

\noindent 
   Below, we describe some results of the application of the RP  to  different 
 instances of the TSP studied in \cite{Stutzle00}  and to one pseudo-Boolean problem.
The underlying algorithm used here in the RP is  the ACO proposed in \cite{Stutzle00}, known as MMAS; for the TSP instances, it is combined with different local search procedures.
   The RP setting is as follows:
$r_{k+1}=f_r(r_k):=c_1\cdot r_{k}$ and
    $T_{k+1}= f_T(T_k):=c_2\cdot T_{k} $ where 
 $c_1=1.2, c_2=1.1$. The initial values for $r$ and $T$ are $20$ and $100$, respectively.
 Finally, we  set $\lambda=\frac 4 5$.   

For both the TSP instances  and the  pseudo-Boolean problem
considered here, the optimal solution is  known. 
This information can be used to estimate the failure probability of the RP and of the underlying algorithm. However, obviously this information cannot be used when applying the RP. 

In order to compare the results from the two algorithms with the same computational effort, we consider for  the RP a pseudo-time $t$, defined as follows:  
for the first RP iteration, the first $T_1$ instants of the  pseudo-time correspond to  the first $T_1$ iterations  of the first replication of the underlying algorithm; the following $T_1$ pseudo-time instants correspond to the analogous of the second replication and so on.  At the end of the $k$-th RP iteration, we  have produced $r_k$ executions  (replications) for  $T_k$ times and the final pseudo-time  instant is  $t=r_k\cdot T_k$.  At the $(k+1)$-th iteration,  we have a certain  $(r_{k+1},T_{k+1})$, with either $r_{k+1}>r_k$ and $T_{k+1}=T_k$ or $r_{k+1}=r_k$ and $T_{k+1}>T_k$. In the first case, the pseudo-time instant $t=T_k\cdot r_{k}+1$ corresponds to the first iteration  time  of the $r_{k}+1$ replication and it is increased  until  the end of that replication. We proceed in the same way  until the end of $r_{k+1}$ replication. In the second case, the pseudo-time instant $t=T_k\cdot r_k +1$ corresponds to the iteration time $T_k+1$ of the first replication and is then increased until the iteration time $T_{k+1}$  of that replication. Then, the same procedure is applied for the remaining replications based on their number.

We denote by  $\tilde Y(t)$ ($t=1,2,\dots$) the process describing   the best so far solution  of the RP (MMAS) corresponding to the pseudo-time (time) instant $t$. Hence, based on a set of $m$ replications of the RP, we can  estimate the failure probability   $ p_{\text{\tiny RP \normalsize}}\!\!(t)$
 by using the classical estimator 
\begin{equation}\label{classestim}
\hat p_{\text{\tiny RP \normalsize}}\!\!(t)=\frac {1} {m}\sum_{i=1}^{ m} 1_{\{\tilde Y_i(t)\neq f_m\}}\, ,
\end{equation}
and analogously with $\hat p(t)$ for the MMAS.
By the law of large numbers  this estimator converges to the failure  probability  $p_{\text{\tiny RP \normalsize}}\!\!(t)$  (to $p(t)$ for the MMAS).


   

We start with the example where we want to minimize the following pseudo-Boolean  function 
\begin{equation}\label{modrapp}
f(x)=-\left\vert\sum_{i=1}^N x_i-\frac{N-1}{2}\right\vert\, ,
\end{equation}
with respect to all binary strings of length $N$.
In Fig.\ \ref{functionboolean}, this function  is plotted versus the number of $1$s
in the case of $N=50$ considered.
\begin{figure}[t!]
\centering
\includegraphics[width=8.cm]{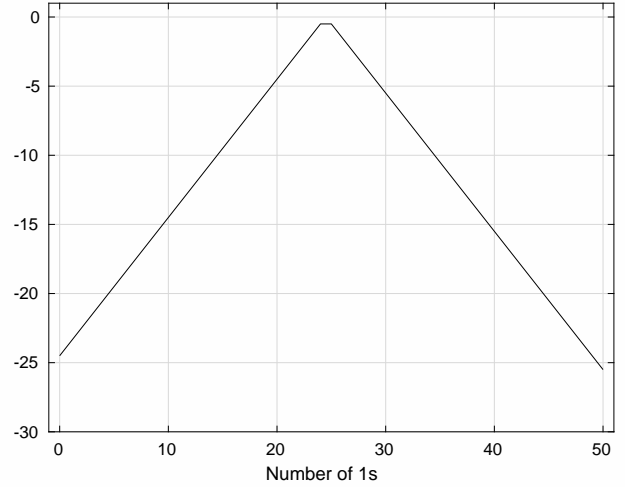}
\caption{\small Plot of the considered pseudo-Boolean function value versus the  number of $1s$ of the binary string.\label{functionboolean}}
\end{figure}
\begin{figure}[h!]
\centering
\includegraphics[width=8.cm]{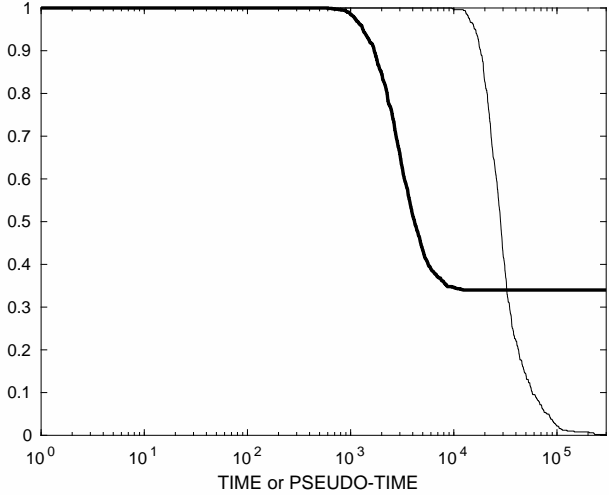}
\caption{\small Pseudo-Boolean problem. The estimated failure probability as function of time or pseudo-time for the standard MMAS (tick line) and the RP (thin line).The time axis is in logarithmic scale.
The f.p.\ curves of both the   RP and  the underlying algorithm     are computed by the estimator in (\ref{classestim})  based on $500$ and $1000$ replications, respectively.
\label{curveboolean}}
\end{figure}
This function has two local minima but only one of them is global. 
As written above, the ACO algorithm considered is the MMAS,
for which the pheromone bounds $ \tau_ {min} $ and $ \tau_{\max}$ ensure that
at any time, there is a positive probability to visit each configuration, e.g.\ the global minimum. Therefore, with probability one this algorithm will find the solution.
However, if it reaches a configuration with  few $1$s,
it takes in average an
enormous amount of time, not available in practice, to move towards the global minimum.
Therefore, we expect that in this case the restart will be successful.


In Fig.\ \ref{curveboolean}, we show the estimated failure probability (f.p.) $\hat p(t)$ for the MMAS algorithm to minimize the pseudo-boolean function of Fig.\ 
\ref{functionboolean} (tick line). In the same figure,  the 
estimated f.p.\ $ \hat p_{\text{\tiny RP \normalsize}}\!\!(t)$ of the RP is plotted versus the pseudo-time (thin line). 
We notice that there is a clear advantage to use the RP  when compared to the standard MMAS.

We consider now  an instance of the TSP with $532$ cities (att532). 
After five hundreds of thousands of iterations,
the underlying algorithm   has an estimated f.p.\ of $0.38$ ca. Instead, at the same value of the pseudo-time, the RP  has a significantly lower f.p.\ ($0.004$ ca), as clearly shown in Fig.\ \ref{532}. We remark that, until the value $3900$ ca for  the time or pseudo-time, the f.p.\ of the underlying algorithm  is lower  than the one of RP.  This is due to the fact that the RP   is still learning the optimal value of the restart time. After that, the trend is inverted: the RP overcomes the MMAS and gains
two  orders of magnitude for very large values of the pseudo-time. 

  \begin{figure}[h!]
\centering
\includegraphics[width=8.cm]{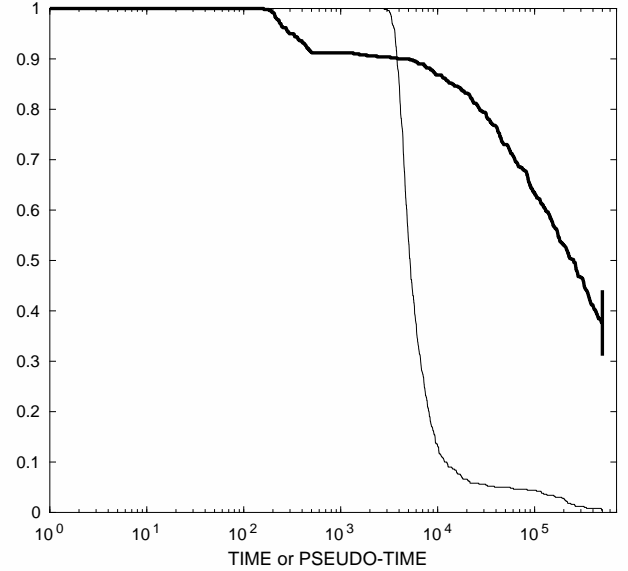}
\caption{\small The TSP instance with $532$ cities (att532). The estimated failure probability as function of time or pseudo-time for the standard MMAS (tick line) and the RP (thin line). The time axis is in logarithmic scale.
The f.p.\ curves of both the   RP and  the underlying algorithm    are computed by the estimator in (\ref{classestim})  based both on $500$  replications. The vertical segment shows  the  $99\%$ level confidence interval.\label{532}}
\end{figure}

We notice that the value $\hat\sigma_k$  approaches the optimal restart time $t_m$. In fact, as an example, in Fig.\ \ref{532sigma}, we show 
the denominator of the function $g_k(t)$ at the end
of a single RP execution. A global maximum appears at approximately the value of $430$, the difference with the value of $t_m$, computed from the estimate $\hat p(t)$, being less than $1 \%$. 


  \begin{figure}[h!]
\centering
\includegraphics[width=8.cm]{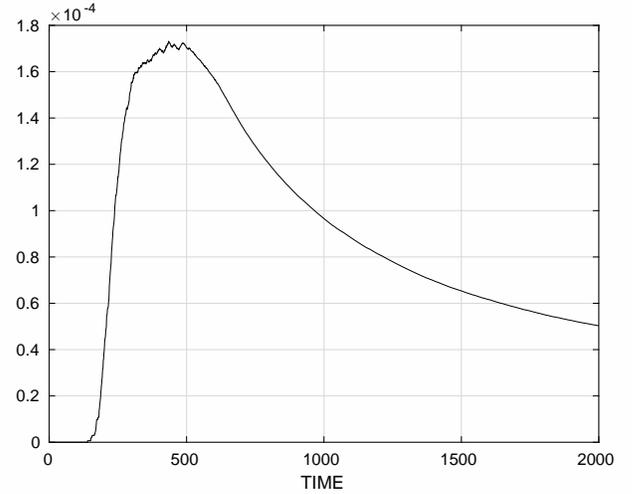}
\caption{\small The TSP instance with $532$ cities (att532).
The denominator of the function $g_k(t)$ at the end of a single RP replication.
 \label{532sigma}}
\end{figure}

 Finally in Fig.\  \ref{532restartfisso}, 
we compare the  f.p.\ curve for the RP 
with  the one obtained applying the restart periodically with the estimated  optimal restart time. We notice that this estimation requires much longer computation than to execute the RP. We remark that  the RP curve starts to decrease significantly
after the other one. This is due to the fact that the RP is still searching for the optimal value of the restart, whereas it is set from the beginning in the other (ideal) case. At about pseudo-time $7000$, the two f.p.s  become almost equal. 
After that, the f.p.\ of the MMAS goes to zero faster, even if the difference
between the two f.p.s remains less than $0.05$ ca. Finally, at pseudo-time $5\cdot 10^5$, the f.p.\ of the RP is $4\cdot 10^{-3}$.

We notice that curves similar to those as in Fig.\ \ref{532},  \ref{532sigma}  and  \ref{532restartfisso} were obtained
for all the other TSP instances considered. The relative results are shown in Table \ref{tabella}.

   \begin{figure}[h!]
\centering
\includegraphics[width=8.cm]{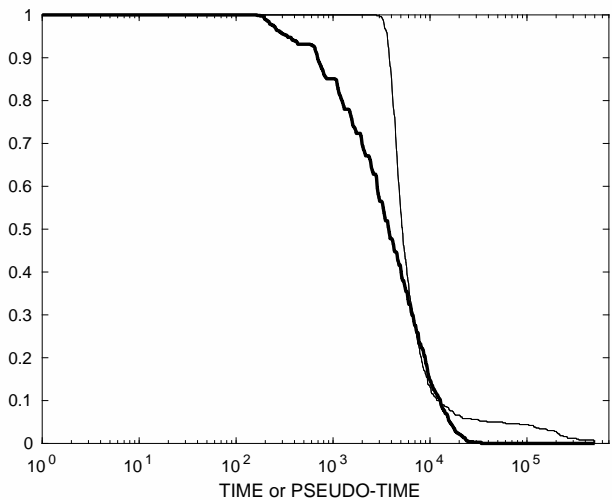}
\caption{\small The TSP instance with $532$ cities (att532).
Comparison between the  failure probability curve of the RP that appears in 
Fig.\ \ref{532} (thin line) and the one obtained applying the restart periodically with the optimal restart time (tick line).
The f.p.\ curves of both the   RP and  the underlying algorithm     are computed by the estimator in (\ref{classestim})  based on $500$ replications.
 \label{532restartfisso}}
\end{figure}
\begin{table}[h!]
\renewcommand{\arraystretch}{1.5}
\begin{tabular}{|*{5}{c|}|}
\hline
\bf{Instance}& \bf{ACO algorithm}&$\mathbf{T_c}$&\bf{ACO f.p. }&\bf{RP f.p.}\\
\hline
boolean50&MMAS&$300000$&$0.34$&$2.1\cdot 10^{-3}$\\
\hline
pcb442&MMAS-3opt&$100000$&$0.22$&$4.0\cdot 10^{-3}$\\
\hline
att532&MMAS-3opt&$500000$&$0.38$&$4.0\cdot 10^{-3}$\\
\hline
lin318&MMAS-2.5opt&$30000$&$0.44$&$0$\\
\hline
d1291&MMAS-3opt&$700000$&$0.57$&$2.0\cdot 10^{-3}$\\
\hline
d198&MMAS-2.5opt&$100000$&$0.67$&$0$\\
\hline
\hline
\end{tabular}
\vspace{.4cm}
\caption{Results of the application of  the RP and the underlying algorithm  to TSP instances with known optimal solutions and to the pseudo-Boolean problem.
  The failure probability (f.p.) values  are computed at the time $T_c$ reported  in the third column  (pseudo-time for the RP). Samples with at least $500$ elements are used.}
\label{tabella}
\end{table}      
  By looking at the results in Table \ref{tabella}, it is evident  the advantage of using the RP instead of the underlying algorithm. In fact, for all instances, the f.p.\ of the RP is several  orders of magnitude lower than the one of the underlying algorithm.

\section{Conclusions}
Given a combinatorial optimization problem, it is often needed  to apply stochastic  algorithms exploring the space using a general criterion independent of the problem. Unfortunately, usually there is a positive probability that the algorithm  remains  in a sub-optimal solution. This problem can be afforded by applying periodic algorithm
re-initializations. This strategy is called restart. Although  it  is often  applied  in practice, there are few works studying it theoretically.  In particular, there are no theoretical information about how to choose a convenient value for the restart time.

In this paper, we propose a new procedure to optimize the restart and we study it theoretically.
The iterative procedure  starts by executing  a certain number of replications
of the underlying algorithm for a predefined time. Then, at any following iteration $k$ of the RP, we compute the minimum value $\tilde Y_k$ of  the objective function. Hence, for each time $t=1,\ldots,T_k$, we estimate the failure probability $\hat p_k(t)$  that we have not yet reached the value  $\tilde Y_k$. After that, we compute 
the position $\hat\sigma_k$ of the first minimum of $g_k(t)$, which is a function  
of the failure probability.   
 If $\hat\sigma_k$ is close to the end of the current execution time frame of the underlying algorithm $T_k$, this last is increased; otherwise the number of replications is increased, which  improves the estimate $g_k(t)$ of $g(t)$.
This is controlled by the parameter $\lambda \in (0,1)$.
The position of the minimum of $g(t)$ corresponds to an ``optimal value'' of the restart time, that minimizes the expected  time to find a solution.
 
The theory predicts that the algorithm will find the optimal value of the restart. In fact, the theorems  proved  demonstrate that,  if $p(t)$ tends to zero, $g(t)$  has only one minimum  at position $t_m$ and it
is a strictly  decreasing function until $t_m$,  then, with probability one, $\hat p_k(t)$, $g_k(t)$ and its first minimum converge to $p(t)$,  $g(t)$ and $t_m$, respectively. 

In this paper, we have shown some results obtained by applying  the RP to several TSP instances with hundreds or thousands of cities. 
The results obtained have shown that the f.p.\ of the RP is several orders of magnitude lower than the one of the underlying algorithm, for equal computational cost. 
Therefore, given a certain computation resource, by applying the RP, we are far more confident that the result obtained  is a solution of the COP instance analyzed.    
The procedure proposed could be improved preserving its performance and decreasing the computational cost. 
A possible way to do it is to increase the parameter $\lambda$ along iterations. In fact, once we have a reasonably good estimate of $g(t)$, we would like
to reduce the possibility that, by chance, we increase too much the time interval length. This can be done by increasing the value of $\lambda$.

\section*{Acknowledgments}
\noindent
The authors are very thankful
to Prof.\ Mauro Piccioni for his very useful comments and suggestions and to
Prof.\ Thomas St\"{u}tzle for the ACOTSP code.

\bibliographystyle{IEEEtran}
\bibliography{restartIEEE}

\end{document}